\newcommand\supp{\mathop{\rm supp}}
\theoremstyle{plain} %text of this environment is typesetted in italics
\newtheorem{theorem}{\indent\sc Theorem}[section]
\newtheorem{lemma}[theorem]{\indent\sc Lemma}
\newtheorem{proposition}[theorem]{\indent\sc Proposition}
\theoremstyle{definition} %text of this environment is typesetted in roman letters
\newtheorem{definition}[theorem]{\indent\sc Definition}
\newtheorem{remark}[theorem]{\indent\sc Remark}
\title{The $H^p(\mathbb{Z}^n) - H^q(\mathbb{Z}^n)$ boundedness of \\ the discrete Riesz potential} %title of the paper
\author{
%
%\small{Dedicated to Professor Xxx Yyy on his sixtieth birthday} dedication if necessary
%
\textsc{Pablo Rocha} %names of authors
}
\date{} %leave empty
\begin{document}

\maketitle

%%%%%%%%%%%%%%% footnote %%%%%%%%%%%%%%%%
\footnote{ %2020 MSC numbers
2020 \textit{Mathematics Subject Classification}.
43A17, 42B30, 42B25.
}
\footnote{ %key words and phrases
\textit{Key words and phrases}:
Discrete Hardy Spaces; Atomic Decomposition; Discrete Riesz Potential
}
%\footnote{ %acknowledgment of support etc. if any
%$^{*}$Thanks.
%}
%%%%%%%%%%%%%%%%%%%%%%%%%%%%%%%%%%%%%%%%%

\begin{abstract}
In [J. Class. Anal., vol. 26 (1) (2025), 63-76], we proved that the discrete Riesz potential $I_{\alpha}$ is a bounded operator 
$H^p(\mathbb{Z}^n) \to H^q(\mathbb{Z}^n)$ for $\frac{n-1}{n} < p \leq 1$, $\frac{1}{q} = \frac{1}{p} - \frac{\alpha}{n}$ and 
$0 < \alpha < n$. In this note, we extend such boundedness on the full range $0 < p \leq 1$. 
\end{abstract}

\section{Introduction}

Let $0 < \alpha < n$, the Riesz potential $\mathcal{R}_{\alpha}$ on $\mathbb{R}^n$ is defined by
\begin{equation} \label{Ralfa}
(\mathcal{R}_{\alpha} f)(x) = \int_{\mathbb{R}^n} f(y) |x-y|^{\alpha-n} dy.
\end{equation}
The well known Hardy-Littlewood-Sobolev inequality establishes that if $1 < p < \frac{n}{\alpha}$ and 
$\frac{1}{q} = \frac{1}{p} - \frac{\alpha}{n}$, then the operator $\mathcal{R}_{\alpha}$ is bounded from $L^{p}(\mathbb{R}^n)$ into 
$L^{q}(\mathbb{R}^n)$. For $p=1$ and $q = n/(n-\alpha)$, $\mathcal{R}_{\alpha}$ is of weak-type $(1, q)$ (see \cite{Stein}). For the 
endpoint $p = \frac{n}{\alpha}$ is known that $\mathcal{R}_{\alpha}$ is not a bounded operator 
$L^{\frac{n}{\alpha}}(\mathbb{R}^n) \to L^{\infty}(\mathbb{R}^n)$, however it can see that 
$\mathcal{R}_{\alpha} : L^{\frac{n}{\alpha}}(\mathbb{R}^n) \to BMO(\mathbb{R}^n)$ is bounded. This last estimate can also be obtained 
by duality. Indeed, $\mathcal{R}_{\alpha} : H^1(\mathbb{R}^n) \to L^{\frac{n}{n-\alpha}}(\mathbb{R}^n)$ is bounded (see \cite{Weiss}), 
where $H^1(\mathbb{R}^n)$ is the $1$-Hardy space on $\mathbb{R}^n$. In \cite{Feff}, C. Fefferman and E. Stein proved that $(H^1)^{*} = BMO$, 
this remarkable result joint with the fact that $(L^{\frac{n}{n-\alpha}})^{*} = L^{\frac{n}{\alpha}}$ allow to obtain, by duality, 
the $L^{\frac{n}{\alpha}}(\mathbb{R}^n) \to BMO(\mathbb{R}^n)$ boundedness of $\mathcal{R}_{\alpha}$.

The real Hardy spaces $H^p(\mathbb{R}^n)$, $\frac{n-1}{n} < p \leq \infty$, were first introduced by E. Stein and G. Weiss in \cite{Weiss}. Therein, the authors describe the $H^p$ theory in terms of systems of conjugate harmonic functions, for it the theory was only developed on the range $\frac{n-1}{n} < p \leq 1$ (see \cite[\S 5.16]{St}). They also showed that $\mathcal{R}_{\alpha}$ is a bounded operator from 
$H^p(\mathbb{R}^n)$ into $H^q(\mathbb{R}^n)$, when $\frac{n-1}{n} < p < \frac{n}{\alpha}$ and $\frac{1}{q} = \frac{1}{p} - \frac{\alpha}{n}$. 
Afterward, C. Fefferman and E. Stein in \cite{Feff} introduced real variable methods to characterize the Hardy spaces by means of maximal functions on the full range $0 < p \leq \infty$. This second approach brought greater flexibility to the theory. For $1 < p \leq \infty$, 
one has that $H^{p}(\mathbb{R}^{n}) \cong L^{p}(\mathbb{R}^{n})$, $H^{1}(\mathbb{R}^{n}) \subset L^{1}(\mathbb{R}^{n})$ strictly, and for 
$0 < p < 1$ the spaces $H^{p}(\mathbb{R}^{n})$ and $L^{p}(\mathbb{R}^{n})$ are not comparable.

The spaces $H^{p}(\mathbb{R}^{n})$ can also be characterized by atomic decompositions. That is, every distribution $f \in H^{p}$ can be expressed by
\[
f = \sum_j \lambda_j a_j, 
\]
where the $a_j$'s are $p$ - atoms, $\{ \lambda_j \} \in \ell^{p}$ and $\| f \|^{p}_{H^{p}} \approx \sum_j | \lambda_j |^{p}$.
For $0 < p \leq 1$, an $p$ - atom is a function $a(\cdot)$ supported on a cube $Q$ such that
\[
\| a \|_{\infty} \leq |Q|^{-1/p} \,\,\  \text{and} \,\, \int x^{\alpha} a(x) dx = 0 \,\,\, \text{for all} \,\, |\alpha| \leq 
n \left( \frac{1}{p}-1 \right).
\]
Such decompositions were obtained by R. Coifman in \cite{Coif} for the case $n=1$ and by R. Latter in \cite{Latter} for the case $n \geq 1$. 
These decompositions, in principle, allow to study the behavior of certain operators on $H^{p}(\mathbb{R}^{n})$ by focusing one's 
attention on individual atoms (see \cite{Bownik}, \cite{Ricci}, \cite{Pablo}). For instance, using such atomic decomposition, it obtains that the Riesz potential $\mathcal{R}_{\alpha}$ is bounded from $H^p(\mathbb{R}^n)$ into $L^q(\mathbb{R}^n)$, for $0 < p \leq 1$ and 
$\frac{1}{q} = \frac{1}{p} - \frac{\alpha}{n}$. In \cite{Taible}, M. H. Taibleson and G. Weiss, by means of a molecular decomposition for members in $H^p(\mathbb{R}^n)$, proved the $H^p(\mathbb{R}^n) \to H^q(\mathbb{R}^n)$ boundedness for $\mathcal{R}_{\alpha}$ on the full range 
$0 < p \leq 1$ (see also \cite{Krantz}). This extends the result obtained by E. Stein and G. Weiss above mentioned. A study about the 
behavior of Riesz potential $\mathcal{R}_{\alpha}$ on others function spaces was made by E. Nakai in \cite{Nakai}.  
 
For $0 < \alpha < n$, the discrete counterpart of (\ref{Ralfa}) is the discrete Riesz potential $I_{\alpha}$ on $\mathbb{Z}^n$, which is 
defined by
\begin{equation} \label{Riesz potential}
(I_{\alpha}b)(j) = \sum_{i \in \mathbb{Z}^n \setminus \{ j \}} \frac{b(i)}{|i-j |^{n - \alpha}}, \,\,\,\,\,\, j \in \mathbb{Z}^n.
\end{equation}
As one would expect, (\ref{Riesz potential}) has a similar behavior to that of (\ref{Ralfa}). Indeed, in \cite[p. 288]{Hardy}, for $n=1$, 
$0 < \alpha < 1$, $1 < p < 1/\alpha$ and $\frac{1}{q} = \frac{1}{p} - \alpha$, G. H. Hardy et al established the $\ell^p(\mathbb{Z}) - \ell^q(\mathbb{Z})$ boundedness of $I_{\alpha}$ (for the case $n \geq 1$ see \cite[Proposition $(a)$]{Wainger}, other proof was given in \cite[Theorem 3.1]{PRocha}).

The theory for Hardy spaces on $\mathbb{Z}^n$ was developed by S. Boza and M. Carro in \cite{Carro} (see also \cite{Boza}). There, the authors
gave a variety of distinct approaches to characterize the discrete Hardy spaces $H^p(\mathbb{Z}^n)$ analogous to the ones given for the Hardy spaces $H^p(\mathbb{R}^n)$. They also described an atomic decomposition for elements in $H^p(\mathbb{Z}^n)$.

Y. Kanjin and M. Satake in \cite{Kanjin}, based on some results of \cite{Boza}, constructed a molecular decomposition for 
$H^p(\mathbb{Z})$ analogous to the ones given by M. Taibleson and G. Weiss in \cite{Taible} for the Hardy spaces $H^p(\mathbb{R}^n)$. With this framework, they obtain the Marcinkiewicz multiplier theorem on $H^p(\mathbb{Z})$ and proved the $H^p(\mathbb{Z}) \to H^q(\mathbb{Z})$ boundedness of discrete Riesz potential $I_{\alpha}$, for $n=1$, $0 < p < \alpha^{-1}$ and $\frac{1}{q} = \frac{1}{p} - \alpha$. The 
$H^{p}(\mathbb{Z}) \to \ell^{q}(\mathbb{Z})$ boundedness of (\ref{Riesz potential}), with $n=1$, was observed by the author in 
\cite[Remark 11]{Rocha}. 

In \cite{PRocha}, by means of the atomic characterization of $H^p(\mathbb{Z}^n)$ developed in \cite{Carro} and the boundedness of discrete fractional maximal operator, we proved that $I_{\alpha}$ is a bounded operator $H^{p}(\mathbb{Z}^n) \to \ell^{q}(\mathbb{Z}^n)$ for $n \geq 1$,
$0 < p < \frac{n}{\alpha}$ and $\frac{1}{q} = \frac{1}{p} - \frac{\alpha}{n}$. 

The $H^p(\mathbb{Z}^n) \to H^q(\mathbb{Z}^n)$ boundedness of $I_{\alpha}$ for $\frac{n-1}{n} < p \leq q \leq 1$, was proved by the author in \cite{Rocha2}. To achieve this result we used the characterization of $H^p(\mathbb{Z}^n)$ by the discrete Riesz transforms and furnished 
a molecular decomposition, as in \cite{Kanjin}, for the elements of $H^{p}(\mathbb{Z}^n)$ on the range $\frac{n-1}{n} < p \leq 1$ with 
$n \geq 1$. We point out that the lower bound $\frac{n-1}{n}$ in this decomposition is a consequence of Theorem 2.6 in \cite{Carro}.

The purpose of this note is to extend the $H^p(\mathbb{Z}^n) \to H^q(\mathbb{Z}^n)$ boundedness of $I_{\alpha}$ obtained in \cite{Rocha2} to the full range $0 < p \leq 1$.

The main result of this note is contained in the following theorem, which will be proved in Section 3 by means of the characterization 
maximal for $H^p(\mathbb{Z}^n)$ established in \cite[Theorem 2.7]{Carro} and the atomic decomposition for $H^p(\mathbb{Z}^n)$ also given in \cite{Carro}, joint with some auxiliary results of Section 2.

\

{\sc Theorem} \ref{main result}.
{\it For $0 < \alpha < n$, let $I_{\alpha}$ be the discrete Riesz potential given by (\ref{Riesz potential}). If
$0 < p \leq 1$ and $\frac{1}{q} = \frac{1}{p} - \frac{\alpha}{n}$, then
\[
\| I_{\alpha} \, b \|_{H^{q}(\mathbb{Z}^n)} \leq C \| b \|_{H^{p}(\mathbb{Z}^n)}
\]
for all $b \in H^{p}(\mathbb{Z}^n)$, where $C$ does not depend on $b$.}

\

{\bf Notation.} Throughout this paper, $C$ will denote a positive real constant not necessarily the same at each occurrence. We set 
$\mathbb{Z}_{+} = \mathbb{N} \cup \{0\}$. For every $A \subset \mathbb{Z}^n$, we denote by $\#A$ and $\chi_{A}$ the cardinality of 
the set $A$ and the characteristic sequence of $A$ on $\mathbb{Z}^n$ respectively. Given a real number $s \geq 0$, we write 
$\lfloor s \rfloor$ for the integer part of $s$. For $j=(j_1, ..., j_n) \in \mathbb{Z}^n$ we consider the two following norms 
$|j| = (j_1^2 + \cdot \cdot \cdot + j_n^2)^{1/2}$ and $|j|_{\infty} = \max \{ |j_k| : k = 1, ..., n \}$. Given $k^0 \in \mathbb{Z}^n$ and 
$m \in \mathbb{Z}_{+}$ fix, we put $Q_{k^0, m} = \{ i \in \mathbb{Z}^n : |i - k^0|_{\infty} \leq m \}$, that is the discrete cube centered 
at $k^0$ and side length $2m + 1$. If $\beta$ is the multiindex $\beta =(\beta_{1},...,\beta _{n}) \in \mathbb{Z}_{+}^{n}$, then 
$[\beta] :=\beta _{1}+...+\beta _{n}$ and $j^{\beta} := j_{1}^{\beta_1} \cdot \cdot \cdot j_{n}^{\beta_n}$ for every $j=(j_1, ..., j_n) \in \mathbb{Z}^n$. By $B_r({\bf 0 })$ we denote the Euclidean ball in $\mathbb{R}^n$ with center at ${\bf 0 }$ and radius $r > 0$. Given a function $F$ defined on $\mathbb{R}^n$, we shall use the notation $F^d$ to indicate its restriction on $\mathbb{Z}^n$. 

\section{Preliminaries}

The Schwartz space $\mathcal{S}(\mathbb{R}^{n})$ is defined by
\[
\mathcal{S}(\mathbb{R}^{n}) = \left\{ \phi \in C^{\infty}(\mathbb{R}^{n}) : \sup_{x \in \mathbb{R}^{n}} (1+|x|)^{N} 
|(\partial^{\beta}\phi)(x)| < \infty \,\,\, \forall \,\, N \in \mathbb{Z}_{+}, \, \beta \in \mathbb{Z}_{+}^{n}  \right\}.
\]
We topologize the space $\mathcal{S}(\mathbb{R}^{n})$ with the following family of seminorms
\[
\| \phi \|_{\mathcal{S}(\mathbb{R}^{n}), \, N} = \sum_{[\beta] \leq N} \sup_{x \in \mathbb{R}^{n}} (1+|x|)^{N} |(\partial^{\beta} \phi)(x)|, \,\,\,\,\,\,\, (N \in \mathbb{Z}_{+}).
\]

For $0 < p < \infty$ and a sequence $b = \{ b(i) \}_{i \in \mathbb{Z}^n}$ we say that $b$ belongs to $\ell^{p}(\mathbb{Z}^n)$ if
\[
\| b \|_{\ell^p(\mathbb{Z}^n)} :=\left( \sum_{i \in \mathbb{Z}^n} |b(i)|^p \right)^{1/p} < \infty.
\]
For $p=\infty$, we say that $b$ belongs to $\ell^{\infty}(\mathbb{Z}^n)$ if 
\[
\|b \|_{\ell^\infty(\mathbb{Z}^n)} := \sup_{i \in \mathbb{Z}^n} |b(i)| < \infty.
\]

Given two sequences $b=\{ b(i) \}_{i \in \mathbb{Z}^n}$ and $c=\{ c(i) \}_{i \in \mathbb{Z}^n}$, their convolution $b \ast c$ is defined by
\[
(b \ast c)(j) = \sum_{i \in \mathbb{Z}^n} b(i) \, c(j-i),
\]
when the series converges (e.g. \cite[Section 1.2.3, p. 21-25]{Loukas}). Moreover, $b \ast c = c \ast b$.

We recall that a discrete cube $Q$ centered at $j =(j_1, ..., j_n)  \in \mathbb{Z}^n$ can be written of the form 
$Q = Q_{j, m} = \prod_{1 \leq l \leq n} [[j_l -m, j_l +m]]$, where for each $l=1, ..., n$, $[[j_l -m, j_l +m]] := [j_l -m, j_l +m] \cap \mathbb{Z} = \{ j_l - m, ..., j_l, ..., j_l + m \}$ with $m \in \mathbb{Z}_{+}$. It is clear that $\# Q = (2m+1)^n$.

Let $0 \leq \alpha < n$, given a sequence $b = \{ b(i) \}_{i \in \mathbb{Z}^n}$ we define the centered fractional maximal sequence 
$M_{\alpha} b$ by
\[
(M_{\alpha} b)(j) = \sup_{Q \ni j} \frac{1}{(\# Q)^{1 - \frac{\alpha}{n}}} \sum_{i \in Q} |b(i)|, \,\,\,\,\,\, j \in \mathbb{Z}^n,
\]
where the supremum is taken over all discrete cubes $Q$ centered at $j$. We observe that if $\alpha = 0$, then $M_0 = M$ where $M$ is the centered discrete maximal operator of Hardy-Littlewood. For $0 \leq \alpha < n$, by Theorem 2.3 and Proposition 2.4 in \cite{PRocha}, we have that the operator $M_{\alpha}$ is bounded from $\ell^{p}(\mathbb{Z}^n)$ into $\ell^{q}(\mathbb{Z}^n)$ for $1 < p < \frac{n}{\alpha}$ and 
$\frac{1}{q} = \frac{1}{p} - \frac{\alpha}{n}$.

We adopt the following definition of discrete Hardy space on $\mathbb{Z}^n$ (see \cite[Theorem 2.7]{Carro}). Let $\Phi \in \mathcal{S}(\mathbb{R}^n)$ with $\int_{\mathbb{R}^n} \Phi =1$. Then, for $t>0$, we put $\Phi_t^d(j) = t^{-n} \Phi(j/t)$ if $j \in \mathbb{Z}^n \setminus \{ {\bf 0} \}$ and $\Phi_t^d({\bf 0}) = 0$. Now, for $0 < p \leq \infty$, we define
\begin{equation} \label{discrete Hp}
H^p(\mathbb{Z}^n) := \left\{ b \in \ell^p(\mathbb{Z}^n) : \sup_{t>0} |(\Phi_t^d \ast b)| \in \ell^p(\mathbb{Z}^n) \right\},
\end{equation}
equipped with the quasi-norm given by
\begin{equation} \label{discrete norm}
\| b \|_{H^p(\mathbb{Z}^n)} := \| b \|_{\ell^p(\mathbb{Z}^n)} + \|\sup_{t>0} |(\Phi_t^d \ast b)| \|_{\ell^p(\mathbb{Z}^n)}.
\end{equation}

In \cite{Carro}, Boza and Carro also gave an atomic characterization of $H^p(\mathbb{Z}^n)$ for $0 < p \leq 1$. Before establishing this result we recall the definition of $(p, \infty, N_p)$-atom in $H^p(\mathbb{Z}^n)$. 

\begin{definition} Let $0 < p \leq 1$ and $N_p := \lfloor n(p^{-1} - 1) \rfloor$. We say that a sequence 
$a = \{ a(j) \}_{j \in \mathbb{Z}^n}$ is an $(p, \infty, N_p)$-atom centered at a discrete cube $Q \subset \mathbb{Z}^n$ if the following three conditions hold:

(a1) $\supp a \subset Q$,

(a2.$p$) $\| a \|_{\ell^\infty(\mathbb{Z}^n)} \leq (\# Q)^{-1/p}$,

(a3.$p$) $\displaystyle{\sum_{j \in Q}} j^{\beta} a(j) = 0$ for every multi-index $\beta=(\beta_1, ..., \beta_n) \in \mathbb{Z}_{+}^{n}$ with 
$[\beta] \leq N_p$.
\end{definition}

The atomic decomposition mentioned for $H^p(\mathbb{Z}^n)$ is established in the following theorem.

\begin{theorem} (\cite[Theorem 3.7]{Carro}) \label{atomic Hp} Let $0 < p \leq 1$, $N_p = \lfloor n (p^{-1} - 1) \rfloor$ and 
$b \in H^{p}(\mathbb{Z}^n)$. Then there exist a sequence of $(p, \infty, N_p)$-atoms $\{ a_k \}_{k=0}^{+\infty}$, a sequence of scalars 
$\{ \lambda_k \}_{k=0}^{+\infty}$ and a positive constant $C$, which depends only on $p$ and $n$, with 
$\sum_{k=0}^{+\infty} |\lambda_k |^{p} \leq C \| b \|_{H^{p}(\mathbb{Z}^n)}^{p}$ such that $b = \sum_{k=0}^{+\infty} \lambda_k a_k$, where the series converges in $H^{p}(\mathbb{Z}^n)$.
\end{theorem}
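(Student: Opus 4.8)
The plan is to adapt the Fefferman--Stein / Calder\'on--Zygmund stopping-time construction to $\mathbb{Z}^n$, taking as the starting point the maximal function
\[
(Mb)(j) := \sup_{t>0} |(\Phi_t^d \ast b)(j)|, \qquad j \in \mathbb{Z}^n,
\]
which by (\ref{discrete norm}) belongs to $\ell^p(\mathbb{Z}^n)$ with $\| Mb \|_{\ell^p(\mathbb{Z}^n)}^p \leq \| b \|_{H^p(\mathbb{Z}^n)}^p$. For each $k \in \mathbb{Z}$ I set $\Omega_k = \{ j \in \mathbb{Z}^n : (Mb)(j) > 2^k \}$. Since $Mb \in \ell^p(\mathbb{Z}^n)$, each $\Omega_k$ is finite, the family is decreasing in $k$, and a summation by parts (layer-cake) identity yields
\[
\sum_{k \in \mathbb{Z}} 2^{kp}\, \# \Omega_k \approx \| Mb \|_{\ell^p(\mathbb{Z}^n)}^p \leq \| b \|_{H^p(\mathbb{Z}^n)}^p .
\]
I then fix a Whitney decomposition of each $\Omega_k$ into pairwise disjoint discrete cubes $\{ Q_{k,i} \}_i$ whose side lengths are comparable to their $|\cdot|_\infty$-distance to $\mathbb{Z}^n \setminus \Omega_k$, so that a fixed dilate of each $Q_{k,i}$ still meets $\mathbb{Z}^n \setminus \Omega_{k}$ and the dilates have bounded overlap; such a decomposition exists on $\mathbb{Z}^n$ exactly as for dyadic cubes in $\mathbb{R}^n$.

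Next I subordinate to $\{ Q_{k,i} \}_i$ a partition of unity $\{ \eta_{k,i} \}_i$, with $0 \leq \eta_{k,i} \leq 1$, $\supp \eta_{k,i}$ contained in a fixed dilate of $Q_{k,i}$, and $\sum_i \eta_{k,i} = \chi_{\Omega_k}$, and I decompose $b$ at each level $k$ into a good part and local bad pieces $b\, \eta_{k,i}$. To enforce the moment condition (a3.$p$) up to order $N_p$, I replace each $b\,\eta_{k,i}$ by its difference with the unique polynomial $P_{k,i}$ of degree $\leq N_p$ for which
\[
\sum_{j \in \mathbb{Z}^n} \big( b(j) - P_{k,i}(j) \big)\, \eta_{k,i}(j)\, j^{\beta} = 0 \qquad \text{for all } [\beta] \leq N_p .
\]
Existence and uniqueness of $P_{k,i}$ reduce to the invertibility of the weighted Gram matrix $\big( \sum_{j} \eta_{k,i}(j)\, j^{\beta+\gamma} \big)_{[\beta],[\gamma] \leq N_p}$, which is positive definite as soon as the monomials of degree $\leq N_p$ are linearly independent on $\supp \eta_{k,i}$; this holds once $\# Q_{k,i}$ exceeds the dimension of the space of such polynomials, and the finitely many smaller cubes are treated by a direct argument.

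I then define the atoms as the normalized telescoping differences between consecutive levels, so that each resulting $a_{k,i}$ is supported in a dilate of $Q_{k,i}$, has vanishing moments up to order $N_p$, and satisfies $\| a_{k,i} \|_{\ell^\infty} \leq (\# Q_{k,i})^{-1/p}$ after extracting a scalar $\lambda_{k,i}$. The key size estimate is that, because $Q_{k,i}$ sits at a comparable scale to $\mathbb{Z}^n \setminus \Omega_k$, the values of $b$ and of the correction polynomials $P_{k,i}$ on $\supp \eta_{k,i}$ are controlled by the value $2^k$ of $Mb$ at a nearby boundary point; this gives $|\lambda_{k,i}| \lesssim 2^k (\# Q_{k,i})^{1/p}$ and hence
\[
\sum_{k,i} |\lambda_{k,i}|^p \lesssim \sum_{k \in \mathbb{Z}} 2^{kp} \sum_i \# Q_{k,i} \lesssim \sum_{k \in \mathbb{Z}} 2^{kp}\, \# \Omega_k \lesssim \| b \|_{H^p(\mathbb{Z}^n)}^p ,
\]
the second inequality using the bounded overlap of the Whitney cubes. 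It remains to check that $\sum_{k,i} \lambda_{k,i} a_{k,i} = b$ with convergence in the quasi-norm (\ref{discrete norm}): convergence in $\ell^p$ and in the maximal-function $\ell^p$ sense as the levels $k \to \pm\infty$ follows from the same coefficient bound together with the pointwise control $Mb \leq 2^{k+1}$ on $\mathbb{Z}^n \setminus \Omega_{k+1}$.

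The main obstacle is the moment-correction step in the discrete setting. On $\mathbb{R}^n$ one uses Lebesgue measure to produce polynomials with prescribed vanishing moments under uniform control, whereas on $\mathbb{Z}^n$ the monomials $\{ j^{\beta} \}_{[\beta] \leq N_p}$ can fail to be linearly independent on a small discrete cube. Thus one must (i) obtain a quantitative lower bound, uniform over the Whitney cubes of a given generation, for the smallest eigenvalue of the weighted Gram matrix above, so that the correction polynomials $P_{k,i}$ are uniformly bounded on their cubes and the normalization (a2.$p$) survives; and (ii) dispose of the finitely many cubes too small to carry all moments up to order $N_p$. Reconciling the punctured kernel $\Phi_t^d({\bf 0}) = 0$ with the pointwise comparisons feeding the size estimate is the remaining technical point.
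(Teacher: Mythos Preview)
The paper does not prove this theorem: it is stated with the citation \cite[Theorem 3.7]{Carro} and invoked as a black box in the proof of Theorem~\ref{main result}. There is therefore no argument in the present paper to compare your proposal against.

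As for the proposal itself, what you have written is a plan for the classical Calder\'on--Zygmund/Latter construction transported to $\mathbb{Z}^n$, not a proof. You correctly identify the two genuinely discrete difficulties --- the possible linear dependence of the monomials $\{ j^\beta \}_{[\beta]\le N_p}$ on small cubes, and the need for a uniform lower bound on the weighted Gram matrices --- but you do not resolve either; the phrase ``the finitely many smaller cubes are treated by a direct argument'' hides the whole issue, since on $\mathbb{Z}^n$ one cannot rescale a small cube into a large one. A further gap is the size estimate: the assertion that ``the values of $b$ \ldots on $\supp \eta_{k,i}$ are controlled by the value $2^k$ of $Mb$ at a nearby boundary point'' is the crux of the construction and, already in $\mathbb{R}^n$, requires a nontrivial subaveraging/reproducing argument linking $b$ to $\Phi_t \ast b$; with the punctured kernel $\Phi_t^d(\mathbf{0})=0$ this step must be written out explicitly, as you yourself note at the end. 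For orientation, the Boza--Carro proof in \cite{Carro} does not run a lattice Calder\'on--Zygmund argument from scratch but instead transfers the known atomic decomposition of $H^p(\mathbb{R}^n)$ to $\mathbb{Z}^n$ via extension and restriction operators between the continuous and discrete settings, which sidesteps the small-cube degeneracy entirely.
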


\begin{remark} \label{Hp equiv}
From \cite[Corollary 4]{Rocha}, it follows for every $0 < p \leq 1$ that $H^p(\mathbb{Z}^n) \subset \ell^p(\mathbb{Z}^n)$ strictly. If 
$\Phi \in \mathcal{S}(\mathbb{R}^n)$ is such that $\supp(\Phi) \subset B_{1/4}({\bf 0 })$, then 
$\sup_{t>0} |(\Phi_t^d \ast b)(j)| \leq C (Mb)(j)$ for all $j \in \mathbb{Z}^n$, where $M$ is the discrete maximal operator of 
Hardy-Littlewood. Thus, by \cite[Theorem 2.3]{PRocha}, $H^p(\mathbb{Z}^n) = \ell^p(\mathbb{Z}^n)$ for $1 < p \leq \infty$, with equivalent norms. 
\end{remark}

We conclude these preliminaries with the following two auxiliary results.

\begin{lemma} For $0 < \alpha < n$, $m \in \mathbb{N}$ and $k^0, k \in \mathbb{Z}^n$ fix, the estimate
\begin{equation} \label{kernel estimate}
\sum_{i \in Q_{k^0, m} \setminus \{ k \}} |k-i|^{\alpha - n} \leq C (2m+1)^{\alpha}
\end{equation}
holds with $C$ independent of $m \in \mathbb{N}$ and $k^0, k \in \mathbb{Z}^n$.
\end{lemma}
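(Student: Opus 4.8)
The plan is to turn the sum into an elementary lattice-point count, after one translation and one case split governed by the $\ell^{\infty}$-distance from $k$ to the cube. First I would use the translation $i \mapsto i - k^0$, a bijection of $\mathbb{Z}^n$ that carries $Q_{k^0,m}$ onto $Q_{{\bf 0},m}$ and preserves $|k-i|$, to reduce to the case $k^0 = {\bf 0}$. Write $Q := Q_{{\bf 0},m}$, so that $\# Q = (2m+1)^n$, and set $\delta := \min_{i \in Q} |k-i|_{\infty} \in \mathbb{Z}_{+}$. Throughout I would use the bound $|k-i|^{\alpha-n} \le |k-i|_{\infty}^{\alpha-n}$, which is legitimate since $|k-i| \ge |k-i|_{\infty}$ and $\alpha - n < 0$.

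\emph{Far case $\delta \ge 2m+1$.} Here every $i \in Q$ satisfies $|k-i| \ge |k-i|_{\infty} \ge \delta \ge 2m+1$, so, using $\alpha - n < 0$ once more,
\[
\sum_{i \in Q \setminus \{k\}} |k-i|^{\alpha-n} \ \le\ (\# Q)\, \delta^{\alpha-n} \ =\ (2m+1)^n \delta^{\alpha-n} \ \le\ (2m+1)^n (2m+1)^{\alpha-n} \ =\ (2m+1)^{\alpha}.
\]

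\emph{Near case $\delta \le 2m$.} Choosing $i_0 \in Q$ with $|k-i_0|_{\infty} = \delta$, the triangle inequality together with the fact that the $\ell^{\infty}$-diameter of $Q$ equals $2m$ gives $|k-i|_{\infty} \le \delta + 2m \le 4m$ for every $i \in Q$, i.e. $Q \subseteq Q_{k,4m}$. Translating by $-k$ and grouping the summands according to $r = |i|_{\infty}$, this yields
\[
\sum_{i \in Q \setminus \{k\}} |k-i|^{\alpha-n} \ \le\ \sum_{i \in Q_{k,4m} \setminus \{k\}} |k-i|^{\alpha-n} \ =\ \sum_{i \in Q_{{\bf 0},4m} \setminus \{{\bf 0}\}} |i|^{\alpha-n} \ \le\ \sum_{r=1}^{4m} \bigl( (2r+1)^n - (2r-1)^n \bigr) r^{\alpha-n}.
\]
Since $(2r+1)^n - (2r-1)^n \le C_n r^{n-1}$ for $r \ge 1$, the right-hand side is at most $C_n \sum_{r=1}^{4m} r^{\alpha-1}$, and because $\alpha > 0$ this is $\le C_{n,\alpha} (4m)^{\alpha} \le C (2m+1)^{\alpha}$ (compare with $\int_0^{4m} x^{\alpha-1}\, dx$, or use monotonicity of $r \mapsto r^{\alpha-1}$ when $\alpha \ge 1$).

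Adding the two cases gives the asserted estimate with $C = C(n,\alpha)$, independent of $m$, $k^0$, $k$. The point that needs care — and the reason for splitting at $\delta = 2m+1$ — is that no single crude bound for $N(r) := \#\{ i \in Q : |k-i|_{\infty} = r \}$ works uniformly in the position of $k$: when $k$ is far one must use $N(r) \le \# Q$, since the spherical-shell count $N(r) \lesssim r^{n-1}$ is useless there, whereas when $k$ is near, all relevant values of $r$ have size $\lesssim m$ and $N(r) \lesssim r^{n-1}$ suffices. Once the cases are separated each is routine; the hypothesis $\alpha > 0$ enters only through $\sum_{r \le R} r^{\alpha-1} \lesssim R^{\alpha}$, and $\alpha < n$ only through $\delta^{\alpha-n} \le (2m+1)^{\alpha-n}$.
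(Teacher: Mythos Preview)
Your proof is correct. Both arguments begin by passing to the $\ell^{\infty}$-distance via $|k-i|^{\alpha-n}\le |k-i|_{\infty}^{\alpha-n}$ and then control the sum by a lattice-point count, but the organization differs. The paper splits the \emph{sum} according to whether each individual $i$ satisfies $|k-i|_{\infty}>m$ or $|k-i|_{\infty}\le m$, bounds the far piece by $m^{\alpha-n}\cdot \# Q$, and handles the near piece with a dyadic-shell decomposition $2^{-(s+1)}m<|k-i|_{\infty}\le 2^{-s}m$. You instead split into two \emph{cases} according to the global parameter $\delta=\mathrm{dist}_{\infty}(k,Q)$: when $\delta\ge 2m+1$ the whole cube is far and the crude bound suffices; when $\delta\le 2m$ you embed $Q$ into $Q_{k,4m}$ and sum over integer shells $|k-i|_{\infty}=r$ directly. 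Your route is slightly more elementary (no dyadic layers, just $\sum_{r\le R} r^{\alpha-1}\lesssim R^{\alpha}$), while the paper's decomposition avoids the case analysis and treats all positions of $k$ at once; the underlying mechanism is the same shell-counting estimate.
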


\begin{proof}
Since $|j|_{\infty} \leq |j|$ for all $j \in \mathbb{Z}^n$, we have
\[
\sum_{i \in Q_{k^0, m} \setminus \{ k \}} |k-i|^{\alpha - n} \leq \sum_{i \in Q_{k^0, m} \setminus \{ k \}} |k-i|^{\alpha - n}_{\infty}
= \sum_{i \in I_{k^0, k, m}} |k-i|^{\alpha - n}_{\infty} + \sum_{i \in J_{k^0, k, m}} |k-i|^{\alpha - n}_{\infty},
\]
where
\[
I_{k^0, k, m} = \{ i \in Q_{k^0, m} : |k-i|_{\infty} > m \} \,\,\, \text{and} \,\,\, J_{k^0, k, m} = \{ i \in Q_{k^0, m} : 0<|k-i|_{\infty} 
\leq m \}.
\]
Now, one has
\[
\sum_{i \in I_{k^0, k, m}} |k-i|^{\alpha - n}_{\infty} \leq m^{\alpha - n} \cdot \#Q_{k^0, m} \leq C (2m+1)^{\alpha}.
\]
On the other hand, there exists a unique $s_0 \in \mathbb{N}$ such that $2^{s_0 - 1} \leq m < 2^{s_0}$. Then,
\[
\sum_{i \in J_{k^0, k, m}} |k-i|^{\alpha - n}_{\infty} \leq \sum_{s=0}^{s_0 - 1} \sum_{2^{-(s+1)} m < |k-i|_{\infty} \leq 2^{-s} m} 
|k-i|^{\alpha - n}_{\infty}
\]
\[
\leq m^{\alpha} \sum_{s=0}^{s_0 - 1} 2^{-(s+1) \alpha} \, \left( (2^{-(s+1)} m)^{-n} \sum_{i: |k-i|_{\infty} \leq 2^{-s} m} 1 \right)
\]
\[
\leq C (2m+1)^{\alpha} \sum_{s=0}^{\infty} 2^{-(s+1) \alpha}.
\]
Thus, (\ref{kernel estimate}) follows.
\end{proof}

\begin{lemma} \label{phimax}
Let $0 < \alpha < n$ and $\Phi \in \mathcal{S}(\mathbb{R}^n)$ such that $\supp(\Phi) \subset B_{1/4}({\bf 0})$. If $I_{\alpha}$ is the discrete Riesz potential given by (\ref{Riesz potential}), then there exists a positive constant $C = C_{\Phi, n, \alpha}$ such that
\begin{equation} \label{ineq puntual}
\sup_{t>0} |(\Phi_t^d \ast I_{\alpha}a)(j)| \leq C (\# Q)^{\frac{\alpha}{n} - \frac{1}{p}},
\end{equation}
holds for all $j \in \mathbb{Z}^n$ and all $p$-discrete atoms $a(\cdot)$ supported on $Q$.
\end{lemma}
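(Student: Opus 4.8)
The plan is to reduce the claimed pointwise bound to a uniform $\ell^{\infty}(\mathbb{Z}^n)$-estimate for the sequence $I_{\alpha}a$ and then to prove that estimate from the vanishing moments of the atom together with (\ref{kernel estimate}). First I would dispose of the trivial case $m=0$, in which $Q$ is a single point and (a3.$p$) with $\beta={\bf 0}$ forces $a\equiv 0$; so one may assume $m\ge 1$ and set $L:=2m+1=(\#Q)^{1/n}$, where $Q=Q_{k^0,m}$. Since $\supp\Phi\subset B_{1/4}({\bf 0})$, for each $t>0$ the sequence $\Phi_t^d$ is supported in $\{j\in\mathbb{Z}^n:0<|j|<t/4\}$, which is empty for $0<t\le 4$ and has cardinality at most $C_n t^n$ for $t>4$; hence $\|\Phi_t^d\|_{\ell^1(\mathbb{Z}^n)}\le t^{-n}\|\Phi\|_{L^{\infty}}C_n t^n=C_n\|\Phi\|_{L^{\infty}}$ for all $t>0$. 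Using the elementary convolution bound $\|f\ast g\|_{\ell^{\infty}}\le\|f\|_{\ell^1}\|g\|_{\ell^{\infty}}$, we then get, for every $j\in\mathbb{Z}^n$,
\[
\sup_{t>0}|(\Phi_t^d\ast I_{\alpha}a)(j)|\ \le\ C_n\|\Phi\|_{L^{\infty}}\,\|I_{\alpha}a\|_{\ell^{\infty}(\mathbb{Z}^n)},
\]
so it suffices to prove $\|I_{\alpha}a\|_{\ell^{\infty}(\mathbb{Z}^n)}\le C(\#Q)^{\frac{\alpha}{n}-\frac1p}$.

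To estimate $(I_{\alpha}a)(i)=\sum_{i'\in Q,\,i'\ne i}a(i')|i-i'|^{\alpha-n}$ at a fixed $i\in\mathbb{Z}^n$ (the sum is over $Q$ since $\supp a\subset Q$), I would split according as $|i-k^0|_{\infty}\le 2nm$ or $|i-k^0|_{\infty}>2nm$. In the first (near) case one has $Q\subset Q_{i,(2n+1)m}$, so (\ref{kernel estimate}) applied with center $i$ and radius $(2n+1)m$ gives $\sum_{i'\in Q,\,i'\ne i}|i-i'|^{\alpha-n}\le C_n(2m+1)^{\alpha}=C_nL^{\alpha}$; together with (a2.$p$) this yields $|(I_{\alpha}a)(i)|\le C_n\|a\|_{\ell^{\infty}}L^{\alpha}\le C_nL^{\alpha-n/p}=C_n(\#Q)^{\frac{\alpha}{n}-\frac1p}$.

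In the second (far) case, $i\ne i'$ and $|i-i'|\ge|i-k^0|-\sqrt n\,m>\tfrac12|i-k^0|$ for all $i'\in Q$, using $|i'-k^0|\le\sqrt n\,m<\tfrac12|i-k^0|$. Here I would exploit the moments by Taylor-expanding the Riesz kernel $\Psi(x):=|x|^{\alpha-n}$ — which is $C^{\infty}$ off the origin and, by homogeneity, satisfies $|(\partial^{\beta}\Psi)(x)|\le C_{\beta}|x|^{\alpha-n-[\beta]}$ — at the point $i-k^0$ to order $N_p$: with $u:=i'-k^0$ (so $|u|\le\sqrt n\,m$),
\[
\Psi\big((i-k^0)-u\big)=\sum_{[\beta]\le N_p}\frac{(-u)^{\beta}}{\beta!}(\partial^{\beta}\Psi)(i-k^0)+R(u),\qquad|R(u)|\le C_{n,p}\,|u|^{N_p+1}\,|i-k^0|^{\alpha-n-N_p-1},
\]
the bound on $R$ coming from the homogeneity estimate for $[\beta]=N_p+1$ together with the fact that the segment from $i-k^0$ to $(i-k^0)-u$ stays at distance $\ge\tfrac12|i-k^0|$ from the origin. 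Summing $a(i')\Psi(i-i')$ over $i'\in Q$, the polynomial part vanishes, because for each $[\beta]\le N_p$ the sum $\sum_{i'\in Q}a(i')(i'-k^0)^{\beta}$ is a finite linear combination of the moments $\sum_{j\in Q}j^{\beta'}a(j)$ with $[\beta']\le[\beta]\le N_p$, all of which are zero by (a3.$p$). Hence $|(I_{\alpha}a)(i)|\le\|a\|_{\ell^{\infty}}(\#Q)\,C_{n,p}(\sqrt n\,m)^{N_p+1}|i-k^0|^{\alpha-n-N_p-1}$. Since $|i-k^0|>2nm\ge\sqrt n\,m$ we have $(\sqrt n\,m/|i-k^0|)^{N_p+1}<1$, and since $|i-k^0|>2nm>\tfrac12 L$ with $\alpha-n<0$ we have $|i-k^0|^{\alpha-n}\le 2^{n}L^{\alpha-n}$; therefore $|(I_{\alpha}a)(i)|\le C_{n,p}\|a\|_{\ell^{\infty}}L^{n}L^{\alpha-n}\le C_{n,p}(\#Q)^{\frac{\alpha}{n}-\frac1p}$ by (a2.$p$). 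Combining the two cases with the reduction step gives (\ref{ineq puntual}).

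I expect the far-range estimate to be the only delicate point. The gain $(\sqrt n\,m/|i-k^0|)^{N_p+1}\le 1$ supplied by the Taylor remainder must be used as it stands and not traded for a power of $L$, because $N_p=\lfloor n(p^{-1}-1)\rfloor\to\infty$ as $p\to 0$ and a stray constant raised to the power $N_p$ would ruin the bound; for the same reason the near/far cut-off has to be placed at a radius that is a fixed ($n$-dependent) multiple of $m$, so that $|i-k^0|$ remains comparable to $L$ on the far side and (\ref{kernel estimate}) applies with a purely dimensional constant on the near side. The resulting constant $C$ in addition depends on $p$ through $N_p$, which is harmless since $p$ is fixed throughout.
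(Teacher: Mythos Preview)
Your argument is correct, and your reduction step---bounding $\sup_{t>0}|(\Phi_t^d\ast I_{\alpha}a)(j)|$ by $\|\Phi_t^d\|_{\ell^1}\,\|I_{\alpha}a\|_{\ell^{\infty}}$ with $\sup_{t>0}\|\Phi_t^d\|_{\ell^1}\le C_n\|\Phi\|_{\infty}$---is precisely what the paper does as well (written there as a single chain of inequalities rather than a separate step).

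The difference is that your near/far split and the Taylor--moment argument in the far region are unnecessary for this lemma. The kernel estimate~(\ref{kernel estimate}) already asserts that
\[
\sum_{i'\in Q_{k^0,m}\setminus\{k\}}|k-i'|^{\alpha-n}\le C(2m+1)^{\alpha}
\]
with $C$ independent of \emph{both} $k^0$ and $k$; in particular it holds for every evaluation point $k\in\mathbb{Z}^n$, not only those near $Q$. Combining this uniform bound directly with (a2.$p$) gives $\|I_{\alpha}a\|_{\ell^{\infty}}\le C(\#Q)^{\frac{\alpha}{n}-\frac{1}{p}}$ in one line, with no appeal to (a3.$p$). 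This is exactly the paper's route. Your near-case computation, where you enlarge $Q$ to a cube centered at the evaluation point before invoking~(\ref{kernel estimate}), suggests you may have overlooked that the constant in~(\ref{kernel estimate}) is uniform in $k$; once you use that, the far case disappears. The Taylor expansion and vanishing moments you develop here are the right tools for the \emph{decay} of $\sup_{t>0}|(\Phi_t^d\ast I_{\alpha}a)(j)|$ away from $Q$---and indeed the paper deploys exactly that machinery in Proposition~\ref{maximal atom estim}---but they are not needed for the uniform pointwise bound~(\ref{ineq puntual}).
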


\begin{proof}
Let $a(\cdot)$ be a $p$-atom supported by the cube $Q_{k^0, m} = \{ i : | i - k^0 |_{\infty} \leq m \}$ centered at $k^0$ and 
side length $2m+1$. Since by hypothesis $\supp(\Phi) \subset B_{1/4}({\bf 0})$ and $\Phi^{d}_{t}({\bf 0}) = 0$, we have
\[
(\Phi_t^d \ast I_{\alpha}a)(j) = 0, 
\]
for all $0 < t < 1$ and $j \in \mathbb{Z}^n$.

For $t \geq 1$ we have $1 \leq \lfloor t \rfloor \leq t$, then the condition (a2.$p$) of the atom $a(\cdot)$ and the estimate 
(\ref{kernel estimate}) give
\begin{align*}
|(\Phi_t^d \ast I_{\alpha}a)(j)| &\leq \sum_{k} |\Phi_{t}(j-k)| \sum_{i \in Q_{k^0, m} \setminus \{ k \}} |a(i)| |k-i|^{\alpha - n} \\
&\leq C (\# Q)^{\frac{\alpha}{n} - \frac{1}{p}} \left( t^{-n} \sum_{|k|_{\infty} \leq \frac{t}{4}} |\Phi(k/t)| \right) \\
&\leq C (\# Q)^{\frac{\alpha}{n} - \frac{1}{p}} \left( \lfloor t \rfloor^{-n} \sum_{|k|_{\infty} \leq \lfloor t \rfloor} 
|\Phi(k/t)| \right) \\
&\leq C \|\Phi \|_{\infty} (\# Q)^{\frac{\alpha}{n} - \frac{1}{p}},
\end{align*}
for all $1 \leq t < \infty$ and $j \in \mathbb{Z}^n$. Thus, (\ref{ineq puntual}) follows.
\end{proof}

\section{Main result}

In this section we establish the $H^p(\mathbb{Z}^n) - H^q(\mathbb{Z}^n)$ boundedness of the discrete Riesz potential $I_{\alpha}$ on the full range $0 < p \leq 1$, where $\frac{1}{q} = \frac{1}{p} - \frac{\alpha}{n}$. For them, we first show that the operator $I_{\alpha}$ is bounded uniformly in the $H^q(\mathbb{Z}^n)$-norm on all $(p, \infty, N_p)$-atoms.

\begin{proposition} \label{maximal atom estim}
Let $0 < \alpha < n$ and $\Phi \in \mathcal{S}(\mathbb{R}^n)$ such that $\supp(\Phi) \subset B_{1/4}({\bf 0})$. If $I_{\alpha}$ is the discrete Riesz potential given by (\ref{Riesz potential}), then, for $0 < p \leq 1$ and $\frac{1}{q} = \frac{1}{p} - \frac{\alpha}{n}$, there exists a positive constant $C$ such that
\begin{equation} \label{uniform estimate}
\|\sup_{t>0}|(\Phi_{t}^d \ast I_{\alpha} a)| \|_{\ell^{q}(\mathbb{Z}^n)} \leq C,
\end{equation}
for all discrete $(p, \infty, N_p)$-atom $a = \{ a(i) \}$.
\end{proposition}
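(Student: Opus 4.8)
\noindent\emph{Plan.} It suffices to prove (\ref{uniform estimate}) for a single $(p,\infty,N_p)$-atom $a$ supported on $Q=Q_{k^0,m}$, uniformly in the atom; we may assume $m\ge 1$, since the only atom with $m=0$ is $a\equiv 0$. Write $L:=2m+1$, let $Q^{*}:=Q_{k^0,4L}$ (so $\#Q^{*}\le C\,\#Q$), and split $\mathbb{Z}^n=Q^{*}\cup(\mathbb{Z}^n\setminus Q^{*})$. On $Q^{*}$ I would simply invoke Lemma \ref{phimax}, which gives $\sup_{t>0}|(\Phi_t^d\ast I_\alpha a)(j)|\le C(\#Q)^{\alpha/n-1/p}$ for all $j$, whence
\[
\sum_{j\in Q^{*}}\Big(\sup_{t>0}|(\Phi_t^d\ast I_\alpha a)(j)|\Big)^{q}\le C(\#Q)^{q(\alpha/n-1/p)}\#Q^{*}\le C(\#Q)^{q(\alpha/n-1/p)+1}=C,
\]
because $q(\alpha/n-1/p)+1=1-q(1/p-\alpha/n)=0$ by $1/q=1/p-\alpha/n$.

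Everything then reduces to the pointwise decay estimate
\begin{equation}\label{pwdecay}
\sup_{t>0}|(\Phi_t^d\ast I_\alpha a)(j)|\le C(\#Q)^{1-1/p}L^{N_p+1}\,|j-k^0|_\infty^{\alpha-n-N_p-1}\qquad(j\notin Q^{*}).
\end{equation}
Once (\ref{pwdecay}) is known, the far contribution is at most $C[(\#Q)^{1-1/p}L^{N_p+1}]^{q}\sum_{|j-k^0|_\infty>4L}|j-k^0|_\infty^{(\alpha-n-N_p-1)q}$, and this last series is comparable to $L^{\,n-(n+N_p+1-\alpha)q}$; it converges because $(n+N_p+1-\alpha)q>n$, which is equivalent to $N_p+1>n(p^{-1}-1)$ and hence holds by the very definition $N_p=\lfloor n(p^{-1}-1)\rfloor$. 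A bookkeeping check shows the total power of $L$ is $q[\,n(1-1/p)+N_p+1\,]+n-(n+N_p+1-\alpha)q=n-nq(1/p-\alpha/n)=0$, so the far contribution is $\le C$; combined with the display above this proves the proposition.

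To establish (\ref{pwdecay}), fix $j\notin Q^{*}$, set $R:=|j-k^0|_\infty>4L$, and fix $t>0$; since $\supp(\Phi)\subset B_{1/4}({\bf 0})$ and $\Phi_t^d({\bf 0})=0$ we may assume $t\ge 4$. Writing $\Phi_t^d\ast I_\alpha a=\Psi_t(I_\alpha a)-t^{-n}\Phi({\bf 0})I_\alpha a$, where $\Psi_t$ denotes convolution with the \emph{unmodified}, finitely supported kernel $k\mapsto t^{-n}\Phi(k/t)$, and using that $\Psi_t$ commutes with $I_\alpha$, one gets $(\Phi_t^d\ast I_\alpha a)(j)=I_\alpha(b_t)(j)-t^{-n}\Phi({\bf 0})(I_\alpha a)(j)$ with $b_t:=\Psi_t a$. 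The second term is harmless, since $\sup_{t\ge 4}t^{-n}|\Phi({\bf 0})|\,|(I_\alpha a)(j)|\le C|(I_\alpha a)(j)|$ and $|(I_\alpha a)(j)|$ already satisfies (\ref{pwdecay}) by the standard argument: subtract the degree-$N_p$ Taylor polynomial of $i\mapsto|j-i|^{\alpha-n}$ at $k^0$, cancel it with the moments (a3.$p$), and estimate the remainder using $|j-i|\ge CR$ on $Q$ and $\|a\|_{\ell^1}\le(\#Q)^{1-1/p}$. For the main term, $b_t$ is supported in a cube concentric with $Q$ of side $\le C\max\{L,t\}$, has vanishing moments $\sum_i i^{\beta}b_t(i)=0$ for $[\beta]\le N_p$ (a binomial expansion reduces this to the moments of $a$), has $\|b_t\|_{\ell^1}\le C\|a\|_{\ell^1}\le C(\#Q)^{1-1/p}$, and — the point that makes large $t$ work — for $t\ge 8L$ satisfies the extra decay $\|b_t\|_{\infty}\le C(\#Q)^{1-1/p}L^{N_p+1}t^{-(n+N_p+1)}$, obtained by Taylor-expanding $i'\mapsto t^{-n}\Phi((i-i')/t)$ to order $N_p$ at $i'=k^0$ and annihilating the polynomial part via (a3.$p$). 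Then one distinguishes: if $4\le t<8L$ or $8L\le t\le R$, then $\supp b_t$ lies at $\ell^\infty$-distance $\gtrsim R$ from $j$, so $i\mapsto|j-i|^{\alpha-n}$ is smooth there with $[\beta]$-th derivatives $\le CR^{\alpha-n-[\beta]}$; subtracting its degree-$N_p$ Taylor polynomial at $k^0$ and using the moments of $b_t$ gives $|I_\alpha(b_t)(j)|\le C\|b_t\|_{\ell^1}(\mathrm{diam}\,\supp b_t)^{N_p+1}R^{\alpha-n-N_p-1}$, which yields (\ref{pwdecay}) directly in the first case ($\mathrm{diam}\le CL$) and, in the second case, after replacing $\|b_t\|_{\ell^1}$ by $\#(\supp b_t)\|b_t\|_{\infty}\le Ct^{n}\|b_t\|_{\infty}$ and $\mathrm{diam}\le Ct$ so the powers of $t$ cancel; if $t>R$ (hence $t\ge 8L$), the point $j$ may lie in $\supp b_t$, so one bounds directly $|I_\alpha(b_t)(j)|\le\|b_t\|_{\infty}\sum_{i\in\supp b_t\setminus\{j\}}|j-i|^{\alpha-n}\le C\|b_t\|_{\infty}\,t^{\alpha}$ by (\ref{kernel estimate}), and since $t>R$ and $\alpha-n-N_p-1<0$ this is $\le C(\#Q)^{1-1/p}L^{N_p+1}t^{\alpha-n-N_p-1}\le C(\#Q)^{1-1/p}L^{N_p+1}R^{\alpha-n-N_p-1}$. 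Taking the supremum over $t\ge4$ gives (\ref{pwdecay}).

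The hard part is (\ref{pwdecay}) uniformly in $t$, above all the range $t\gg R$: there the smoothed atom $b_t$ is spread over a cube of side $\sim t$, so neither Taylor expansion of the Riesz kernel (the target point need not be outside $\supp b_t$) nor the crude bound $\|b_t\|_{\ell^1}(\mathrm{diam})^{N_p+1}R^{\alpha-n-N_p-1}$ alone suffices; one must extract the genuine $\ell^{\infty}$-decay $\|b_t\|_{\infty}\lesssim t^{-(n+N_p+1)}$ coming from the cancellation of $a$ and combine it with the bare kernel-sum bound (\ref{kernel estimate}). Since the relation $1/q=1/p-\alpha/n$ makes the whole scheme scale-critical, the decay rate $t^{-(n+N_p+1)}$, the support growth $\sim t$, and the summability exponent $(n+N_p+1-\alpha)q>n$ have to be matched exactly; disposing of the origin-modification of $\Phi_t^d$, handled above by peeling off $t^{-n}\Phi({\bf 0})I_\alpha a$, is a minor additional point.
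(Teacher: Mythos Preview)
Your argument is correct and follows the same overall architecture as the paper's proof: both split $\mathbb{Z}^n$ into a dilate of the atom's cube and its complement, invoke Lemma~\ref{phimax} on the near part, and on the far part obtain the pointwise decay $\sup_{t}|(\Phi_t^d\ast I_\alpha a)(j)|\lesssim(\#Q)^{-1/p}L^{\,n+N_p+1}|j-k^0|_\infty^{\alpha-n-N_p-1}$ by Taylor--expanding the Riesz kernel when $t$ is small relative to $|j-k^0|_\infty$ and Taylor--expanding $\Phi_t$ when $t$ is large. The organizational differences are: (i) you first peel off the origin term and rewrite $\Phi_t^d\ast I_\alpha a=I_\alpha(\Psi_t a)-t^{-n}\Phi({\bf 0})I_\alpha a$, whereas the paper works with $\Phi_t^d$ directly throughout; (ii) the paper covers the entire range $1\le t\le 2|j-k^0|_\infty$ in one stroke by expanding $i\mapsto|i-k|^{\alpha-n}$ at $k^0$ for each $k$ in the (small) support of $\Phi_t^d(j-\cdot)$, while you use three $t$--ranges and in the middle one invoke the $\|b_t\|_\infty$ decay to cancel the growth of $\operatorname{diam}(\supp b_t)$; (iii) the paper packages the far bound as $(\#Q)^{-1/p}\bigl[M_{\alpha n/(n+N)}(\chi_{Q})\bigr]^{(n+N)/n}$ and then applies the $\ell^{\widetilde p}\to\ell^{\widetilde q}$ boundedness of the fractional maximal operator, whereas you sum the decay $|j-k^0|_\infty^{(\alpha-n-N_p-1)q}$ directly. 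Your direct summation is the more elementary route; the paper's two--range treatment of the far part is the cleaner one. One small slip: in your case $t>R$ the parenthetical ``hence $t\ge 8L$'' does not follow from $R>4L$ alone, but this is harmless since the bound $\|b_t\|_\infty\le C(\#Q)^{1-1/p}L^{N_p+1}t^{-(n+N_p+1)}$ in fact holds for every $t>0$.
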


\begin{proof}
To prove (\ref{uniform estimate}), let $a(\cdot)$ be a discrete $(p, \infty, N_p)$-atom centered at the cube 
$Q_{k^0}= \displaystyle{\prod_{1 \leq l \leq n}}\left[\left[ k^0_l - m, k^0_l + m \right]\right]$. We put
$4\lfloor \sqrt{n} \rfloor Q_{k^0} = \displaystyle{\prod_{1 \leq l \leq n}} \left[\left[ k^0_l - 4\lfloor \sqrt{n} \rfloor m, k^0_l + 4\lfloor \sqrt{n} \rfloor m \right]\right]$. Then, we split
\begin{align}
\sum_{j \in \mathbb{Z}^n} |\sup_{t>0}|(\Phi_{t}^d \ast I_{\alpha} \, a)(j)|^{q} &= \sum_{j \in 4\lfloor \sqrt{n} \rfloor Q_{k^0}} 
|\sup_{t>0}|(\Phi_{t}^d \ast I_{\alpha} \, a)(j)|^{q} \label{sum2} \\
&+ \sum_{j \in \mathbb{Z}^n \setminus 4\lfloor \sqrt{n} \rfloor Q_{k^0}} |\sup_{t>0}|(\Phi_{t}^d \ast I_{\alpha} \, a)(j)|^{q}. \nonumber
\end{align} 
To estimate the first sum in (\ref{sum2}), we apply Lemma \ref{phimax} above, which leads to
\begin{align}
\sum_{j \in 4\lfloor \sqrt{n} \rfloor Q_{k^0}}|\sup_{t>0}|(\Phi_{t}^d \ast I_{\alpha} \, a)(j)|^{q} &\leq  
C (\# Q_{k^0}) (\# Q_{k^0})^{q(\frac{\alpha}{n} - \frac{1}{p})} = C, \label{estim C}
\end{align}
with $C$ independent of $k^0$ and $m$.

To estimate the second sum in (\ref{sum2}), we analize the cases $0 < t < 1$ and $t \geq 1$ separately. For $0 < t < 1$, by taking into account that $\supp(\Phi) \subset B_{1/4}({\bf 0})$ and $\Phi^{d}_{t}({\bf 0}) = 0$, we have 
\begin{equation} \label{cero}
(\Phi_t^d \ast I_{\alpha}a)(j) = 0
\end{equation}
for all $j \in \mathbb{Z}^n$. For $t \geq 1$ we consider two sub-cases,

\

{\bf Case I:} $|j - k^0 |_{\infty} > 4 \lfloor \sqrt{n} \rfloor m$ and $1 \leq t \leq 2 |j - k^0 |_{\infty}$. We have
\[
(\Phi_t^d \ast I_{\alpha}a)(j) = \sum_{k : |j-k|_{\infty} \leq \frac{t}{4}} \Phi^{d}_{t}(j-k) \sum_{i \in Q_{k^0}} a(i) |i-k|^{\alpha-n}.
\]
Now, $|j-k|_{\infty} \leq t/4 \leq |j-k^0|_{\infty}/2$ and $|j - k^0 |_{\infty} > 4 \lfloor \sqrt{n} \rfloor m$ imply that 
$|k - k^0 |_{\infty} \geq |j - k^0 |_{\infty}/2 \geq 2|i - k^0 |_{\infty}$. We put $N - 1 = \lfloor n(p^{-1} - 1) \rfloor$. In view of the moment condition (a3.$p$) of $a(\cdot)$ we have, for $j \in \mathbb{Z}^n \setminus 4 \lfloor \sqrt{n} \rfloor Q_{k^0}$, that
\[
(\Phi_{t}^{d} \ast I_{\alpha} \, a)(j) = \sum_{k : |j-k|_{\infty} \leq \frac{t}{4}} \Phi^{d}_{t}(j-k) \sum_{i \in Q_{k^0}} a(i) 
\left[|i-k|^{\alpha-n} - q_{N}(i,k) \right],
\]
where $q_{N}(\, \cdot \,, k)$ is the degree $N - 1$ Taylor polynomial of the function $x \rightarrow |k - x|^{\alpha-n}$ expanded around 
$k^0$. By the standard estimate of the remainder term in the Taylor expansion there exists $\xi$ between $i$ and $k^0$ such that
\[
\left| |i-k|^{\alpha-n} - q_{N}(i,k) \right| \leq C |i - k^0 |^{N} |k - \xi|^{\alpha-n-N}.
\]
Since $|k - k^0 |_{\infty} \geq 2|i - k^0 |_{\infty}$ for any $i \in Q_{k^0}$, we get 
$|k - \xi| \geq |k - \xi|_{\infty} \geq |k - k^0|_{\infty}/2$. So,
\[
\left| |i-k|^{\alpha-n} - q_{N}(i,k) \right| \leq C |i - k^0 |^{N}_{\infty} |k - k^0|^{\alpha-n-N}_{\infty}.
\]
As also $|k - k^0 |_{\infty} \geq |j - k^0 |_{\infty}/2$ for any $j \notin 4\lfloor \sqrt{n} \rfloor Q_{k^0}$, it follows
\[
\left| |i-k|^{\alpha-n} - q_{N}(i,k) \right| \leq C |i - k^0 |^{N}_{\infty} |j - k^0|^{\alpha-n-N}_{\infty},
\]
with $C$ independent of $i$, $j$, $k$ and $k^0$. This inequality, the condition (a2.$p$) of the atom $a(\cdot)$ and the fact that 
$\{ k : |k|_{\infty} \leq t/4 \} \subset \{ k : |k|_{\infty} \leq \lfloor t \rfloor \}$, where $1 \leq \lfloor t \rfloor \leq t$, allow us to conclude that
\[
|(\Phi^{d}_{t} \ast I_{\alpha}a)(j)| \leq C \frac{(2m+1)^{n+N}}{(\# Q_{k^0})^{1/p}} | j - k^0|^{\alpha-n-N}_{\infty} 
\left( \lfloor t \rfloor^{-n} \sum_{|k|_{\infty} \leq \lfloor t \rfloor} |\Phi(k/t)| \right)
\]
\[
\leq C \| \Phi \|_{\infty} \frac{(2m+1)^{n+N}}{(\# Q_{k^0})^{1/p}} | j - k^0|^{\alpha-n-N}_{\infty}
\]
\[
\leq \frac{C}{(\# Q_{k^0})^{1/p}} \left( \frac{(2m+1)^n}{(| j - k^0|_{\infty}^{n})^{1-\frac{\alpha}{n+N}}} \right)^{\frac{n+N}{n}}
\]
\[
\leq \frac{C}{(\# Q_{k^0})^{1/p}} \left( \frac{1}{(\# Q_{j, 2| j - k^0|_{\infty}})^{1-\frac{\alpha}{n+N}}} 
\sum_{i \in Q_{j, 2| j - k^0|_{\infty}}} \chi_{Q_{k^0}}(i) \right)^{\frac{n+N}{n}}
\]
where $Q_{j, 2| j - k^0|_{\infty}}$ is the discrete cube centered at $j$ and side length $4| j - k^0|_{\infty} + 1$. Thus
\[
|(\Phi^{d}_{t} \ast I_{\alpha}a)(j)| \leq \frac{C}{(\# Q_{k^0})^{1/p}} 
\left[ M_{\frac{\alpha n}{n+N}} (\chi_{Q_{k^0}})(j) \right]^{\frac{n+N}{n}}
\]
for all $j \notin 4 \lfloor \sqrt{n} \rfloor Q_{k^0}$ and $1 \leq t \leq 2 |j - k^0 |_{\infty}$. 

\

{\bf Case II:} $|j - k^0 |_{\infty} > 4 \lfloor \sqrt{n} \rfloor m$ and $t > 2 |j - k^0 |_{\infty}$. We have that
\[
(\Phi_t^d \ast I_{\alpha}a)(j) = \sum_{l : |j-l|_{\infty} \leq \frac{t}{4}} \Phi^{d}_{t}(j-l) \sum_{i \in Q_{k^0}} a(i) \, |i-l|^{\alpha-n}
\]
\[
= \sum_{i \in Q_{k^0}} a(i) \sum_{l : |j-l|_{\infty} \leq \frac{t}{4}} \Phi^{d}_{t}(j-l) \, |i-l|^{\alpha-n}
\]
\[
= \sum_{i \in Q_{k^0}} a(i) \sum_{k} \Phi^{d}_{t}(j- k -i) \, |k|^{\alpha-n},
\]
where $k = l - i$, in this case $|j-k-i|_{\infty} \leq t/4$ implies that the inner sum is vanishing on $|k|_{\infty} > 2t$, so
\[
(\Phi_t^d \ast I_{\alpha}a)(j) = \sum_{0 < |k|_{\infty} \leq 2t} |k|^{\alpha-n} \sum_{i \in Q_{k^0}} a(i) \, \Phi^{d}_{t}(j- k -i).
\]
As before, we put $N - 1 = \lfloor n(p^{-1} - 1) \rfloor$. Then, for $j \in \mathbb{Z}^n \setminus 4 \lfloor \sqrt{n} \rfloor Q_{k^0}$, the moment condition (a3.$p$) of $a(\cdot)$ gives
\[
(\Phi_t^d \ast I_{\alpha}a)(j) = \sum_{0< |k|_{\infty} \leq 2t} |k|^{\alpha-n} \sum_{i \in Q_{k^0}} a(i) \, 
\left[\Phi^{d}_{t}(j- k -i) - \widetilde{q}_N(i, j, k, t) \right],
\]
where $\widetilde{q}_N(\, \cdot \,, j, k, t)$ is the degree $N - 1$ Taylor polynomial of the function $x \rightarrow \Phi_{t}(j- k -x)$ expanded around $k^0$. By the standard estimate of the remainder term in the Taylor expansion we have that
\[
\left| \Phi^{d}_{t}(j- k -i) - \widetilde{q}_N(i, j, k, t) \right| \leq C \| \Phi \|_{\mathcal{S}(\mathbb{R}^n), N} \, |i - k^0|^{N} t^{-n-N},
\]
where $C$ does not depend on $i$, $j$, $k$ and $t$. This inequality and the condition (a2.$p$) of the atom $a(\cdot)$ lead to
\[
\left|(\Phi_t^d \ast I_{\alpha}a)(j)\right| \leq C \frac{(2m+1)^{n+N}}{(\# Q_{k^0})^{1/p}}t^{-n-N}\sum_{0 < |k|_{\infty} \leq 2t} |k|^{\alpha-n}.
\]
Now, the estimate (\ref{kernel estimate}) and $t > 2 |j - k^0 |_{\infty}$ imply
\[
\left|(\Phi_t^d \ast I_{\alpha}a)(j)\right| \leq C \frac{(2m+1)^{n+N}}{(\# Q_{k^0})^{1/p}} |j - k^0 |_{\infty}^{\alpha-n-N},
\]
\[
\leq \frac{C}{(\# Q_{k^0})^{1/p}} 
\left[ M_{\frac{\alpha n}{n+N}} (\chi_{Q_{k^0}})(j) \right]^{\frac{n+N}{n}},
\]
for all $j \notin 4 \lfloor \sqrt{n} \rfloor Q_{k^0}$ and $t > 2 |j - k^0 |_{\infty}$. 

Thus, (\ref{cero}) and the estimates obtained in the cases $I$ and $II$ above give
\begin{equation} \label{cota afuera}
\sum_{j \in \mathbb{Z}^n \setminus 4\lfloor \sqrt{n} \rfloor Q_{k^0}} \sup_{t>0}|(\Phi^{d}_{t} \ast I_{\alpha}a)(j)|^q \leq 
\frac{C}{(\# Q_{k^0})^{q/p}} \sum_{j \in \mathbb{Z}^n} \left[ M_{\frac{\alpha n}{n+N}} (\chi_{Q_{k^0}})(j) \right]^{q \frac{n+N}{n}}.
\end{equation}
Since  $N-1= \lfloor n(\frac{1}{p}-1) \rfloor$, we have $q \frac{n+N}{n} > 1$. We write $\widetilde{q} = q \frac{n+N}{n}$ and let 
$\frac{1}{\widetilde{p}} = \frac{1}{\widetilde{q}} + \frac{\alpha}{n+N}$, so $\frac{\widetilde{p}}{\widetilde{q}} = \frac{p}{q}$. From
Proposition 2.4 in \cite{PRocha}, we obtain
\begin{equation} \label{cota afuera 2}
\sum_{j \in \mathbb{Z}^n} \left[ M_{\frac{\alpha n}{n+N}} (\chi_{Q_{k^0}})(j) \right]^{q \frac{n+N}{n}} \leq
C \left( \sum_{j \in \mathbb{Z}^n} \chi_{Q_{k^0}}(j) \right)^{q/p} = C (\# Q_{k^0})^{q/p}.
\end{equation}
Now, (\ref{cota afuera}) and (\ref{cota afuera 2}) give
\begin{equation} \label{cota afuera 3}
\sum_{j \in \mathbb{Z}^n \setminus 4\lfloor \sqrt{n} \rfloor Q_{k^0}} \sup_{t>0}|(\Phi^{d}_{t} \ast I_{\alpha}a)(j)|^q \leq C.
\end{equation}
Finally, (\ref{estim C}) and (\ref{cota afuera 3}) lead to (\ref{uniform estimate}). Thus the proof is finished.
\end{proof}

\begin{theorem} \label{main result}
For $0 < \alpha < n$, let $I_{\alpha}$ be the discrete Riesz potential given by (\ref{Riesz potential}). If
$0 < p \leq 1$ and $\frac{1}{q} = \frac{1}{p} - \frac{\alpha}{n}$, then
\[
\| I_{\alpha} \, b \|_{H^{q}(\mathbb{Z}^n)} \leq C \| b \|_{H^{p}(\mathbb{Z}^n)}
\]
for all $b \in H^{p}(\mathbb{Z}^n)$, where $C$ does not depend on $b$.
\end{theorem}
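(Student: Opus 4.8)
The plan is to reduce the theorem to the uniform atomic bound already isolated in Proposition \ref{maximal atom estim}. Starting from $b \in H^p(\mathbb{Z}^n)$, I would invoke the atomic decomposition of Theorem \ref{atomic Hp} to write $b = \sum_{k=0}^{\infty} \lambda_k a_k$ with $(p,\infty,N_p)$-atoms $a_k$ and $\sum_k |\lambda_k|^p \leq C\|b\|_{H^p(\mathbb{Z}^n)}^p$. The key point is that $I_\alpha$ applied to a single atom lands in $H^q(\mathbb{Z}^n)$ with a norm bound independent of the atom, and that this is summable in $\ell^q$-style because $q/p < 1$ is false --- wait, in fact $0 < p \leq q \leq 1$ so $q \geq p$, hence $p/q \leq 1$ and $\sum_k |\lambda_k|^q \leq (\sum_k |\lambda_k|^p)^{q/p}$ by the embedding $\ell^p \hookrightarrow \ell^q$; this is exactly the elementary inequality that makes the sub-additivity of the $q$-quasi-norm work.

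Concretely, the steps would be: (i) By linearity, $I_\alpha b = \sum_k \lambda_k I_\alpha a_k$, at least formally; I would first check convergence of this series in $\ell^q(\mathbb{Z}^n)$, using the fact from \cite{PRocha} (cited in the introduction) that $I_\alpha : H^p(\mathbb{Z}^n) \to \ell^q(\mathbb{Z}^n)$ is bounded, or alternatively that each $\|I_\alpha a_k\|_{\ell^q} \leq C$ uniformly together with $\{\lambda_k\} \in \ell^q$. This legitimizes interchanging $I_\alpha$ with the sum and also controls the first term $\|I_\alpha b\|_{\ell^q(\mathbb{Z}^n)}$ in the quasi-norm \eqref{discrete norm}. (ii) For the maximal-function term, using that $t \mapsto \Phi_t^d \ast (\cdot)$ is linear and that the supremum of a sum of absolute values is dominated by the sum of suprema, I get
\[
\sup_{t>0}\bigl|\bigl(\Phi_t^d \ast I_\alpha b\bigr)(j)\bigr| \leq \sum_k |\lambda_k|\, \sup_{t>0}\bigl|\bigl(\Phi_t^d \ast I_\alpha a_k\bigr)(j)\bigr|.
\]
(iii) Taking $\ell^q$ quasi-norms and using $q$-subadditivity (i.e. $\|\sum f_k\|_{\ell^q}^q \leq \sum \|f_k\|_{\ell^q}^q$ since $q \leq 1$), I obtain
\[
\Bigl\|\sup_{t>0}\bigl|\Phi_t^d \ast I_\alpha b\bigr|\Bigr\|_{\ell^q(\mathbb{Z}^n)}^q \leq \sum_k |\lambda_k|^q \Bigl\|\sup_{t>0}\bigl|\Phi_t^d \ast I_\alpha a_k\bigr|\Bigr\|_{\ell^q(\mathbb{Z}^n)}^q \leq C \sum_k |\lambda_k|^q,
\]
where the last inequality is precisely Proposition \ref{maximal atom estim}. (iv) Finally $\sum_k |\lambda_k|^q \leq \bigl(\sum_k |\lambda_k|^p\bigr)^{q/p} \leq C\|b\|_{H^p(\mathbb{Z}^n)}^q$ because $p \leq q$. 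Combining (i) and (iv) with \eqref{discrete norm} gives $\|I_\alpha b\|_{H^q(\mathbb{Z}^n)} \leq C\|b\|_{H^p(\mathbb{Z}^n)}$.

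I should be careful about one subtlety: Proposition \ref{maximal atom estim} is stated for a fixed $\Phi$ with $\supp \Phi \subset B_{1/4}(\mathbf 0)$, whereas the definition \eqref{discrete Hp} of $H^q(\mathbb{Z}^n)$ allows any fixed $\Phi \in \mathcal{S}(\mathbb{R}^n)$ with $\int \Phi = 1$. By \cite[Theorem 2.7]{Carro} all such choices give equivalent quasi-norms, so it suffices to work with the compactly supported $\Phi$; I would state this reduction explicitly at the start. I also need the convergence in (i) to be genuine, not just formal: since $b = \sum_k \lambda_k a_k$ converges in $H^p(\mathbb{Z}^n)$ hence in $\ell^p(\mathbb{Z}^n)$, and $I_\alpha$ is bounded $H^p \to \ell^q$, the partial sums $I_\alpha(\sum_{k \leq K} \lambda_k a_k)$ converge to $I_\alpha b$ in $\ell^q$, while the uniform estimate $\|I_\alpha a_k\|_{\ell^q} \leq C$ (a consequence of Lemma \ref{phimax}-type reasoning, or directly of the $H^p \to \ell^q$ boundedness applied to atoms) plus $\{\lambda_k\} \in \ell^q$ shows $\sum_k \lambda_k I_\alpha a_k$ converges absolutely in $\ell^q$; the two limits agree.

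The main obstacle is not any single deep estimate --- Proposition \ref{maximal atom estim} already did the heavy lifting of controlling the maximal function of $I_\alpha$ on an atom via the fractional maximal operator $M_{\alpha n/(n+N)}$ and its $\ell^{\widetilde p} \to \ell^{\widetilde q}$ boundedness. Rather, the only thing requiring genuine care is the bookkeeping around convergence: ensuring that $I_\alpha$ commutes with the infinite atomic sum in the appropriate topology, and that the interchange of $\sup_{t>0}$ with the sum over $k$ is justified pointwise (which it is, being a supremum of a nonnegative series). Once these routine but necessary justifications are in place, the proof is a short assembly of Theorem \ref{atomic Hp}, Proposition \ref{maximal atom estim}, the $q$-subadditivity of $\ell^q$ for $q \leq 1$, and the embedding $\ell^p \hookrightarrow \ell^q$.
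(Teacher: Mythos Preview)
Your approach is essentially the same as the paper's: atomic decomposition, pointwise domination of the maximal function, Proposition \ref{maximal atom estim}, and then the $\ell^p \hookrightarrow \ell^q$ embedding. The convergence justification you give (via the known $H^p \to \ell^q$ boundedness of $I_\alpha$) is a legitimate alternative to the paper's route, which instead fixes an auxiliary $p_0 \in (1, n/\alpha)$ and uses the $\ell^{p_0} \to \ell^{q_0}$ boundedness together with $H^p \subset \ell^p \subset \ell^{p_0}$.

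There is, however, a genuine gap. You assert that ``in fact $0 < p \leq q \leq 1$'' and then invoke $q$-subadditivity of the $\ell^q$ quasi-norm ``since $q \leq 1$''. This is false: for $0 < p \leq 1$ and $\tfrac{1}{q} = \tfrac{1}{p} - \tfrac{\alpha}{n}$ one has $q = \tfrac{np}{n - \alpha p}$, which exceeds $1$ whenever $p > \tfrac{n}{n+\alpha}$; in particular $p = 1$ gives $q = \tfrac{n}{n-\alpha} > 1$. Your step (iii) therefore breaks down for the entire range $\tfrac{n}{n+\alpha} < p \leq 1$. The paper handles this by splitting into two cases: for $0 < q < 1$ it uses the same $q$-subadditivity you use, while for $1 \leq q \leq \tfrac{n}{n-\alpha}$ it applies Minkowski's inequality to obtain
\[
\Bigl\|\sup_{t>0}\bigl|\Phi_t^d \ast I_\alpha b\bigr|\Bigr\|_{\ell^q} \leq \sum_k |\lambda_k|\, \Bigl\|\sup_{t>0}\bigl|\Phi_t^d \ast I_\alpha a_k\bigr|\Bigr\|_{\ell^q},
\]
and then controls $\sum_k |\lambda_k|^{\min\{1,q\}}$ by $(\sum_k |\lambda_k|^p)^{1/p}$. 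Once you add this case distinction, your argument is complete and coincides with the paper's.
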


\begin{proof}
We fix $p_0$ such that $1 < p_0 < \frac{n}{\alpha}$ and $\Phi \in \mathcal{S}(\mathbb{R}^n)$ with $\supp(\Phi) \subset B_{1/4}({\bf 0 })$ 
and $\int \Phi = 1$. By Theorem \ref{atomic Hp}, given $b \in H^{p}(\mathbb{Z}^n)$, with $0 < p \leq 1$, we can write 
$b = \sum_k \lambda_k a_k$ where the $a_k$'s are $(p, \infty, N_p)$ atoms, the scalars $\lambda_k$ satisfies 
$\sum_{k} |\lambda_k |^{p} \leq C \| b \|_{H^{p}(\mathbb{Z}^n)}^{p}$  and the series converges in $H^{p}(\mathbb{Z}^n)$ and 
so in $\ell^{p_0}(\mathbb{Z}^n)$ since $H^{p}(\mathbb{Z}^n) \subset \ell^{p}(\mathbb{Z}^n) \subset \ell^{p_0}(\mathbb{Z}^n)$ embed continuously. Now for $\frac{1}{q_0} = \frac{1}{p_0} - \frac{\alpha}{n}$, by \cite[Theorem 3.1]{PRocha}, the discrete Riesz potential 
$I_{\alpha}$ is a bounded operator $\ell^{p_0}(\mathbb{Z}^n) \to \ell^{q_0}(\mathbb{Z}^n)$ and since 
$H^{q_0}(\mathbb{Z}^{n}) = \ell^{q_0}(\mathbb{Z}^{n})$ with equivalent norms (see Remark \ref{Hp equiv}), it follows that
\begin{equation} \label{discrete maximal}
\sup_{t>0} |(\Phi_{t}^d \ast I_{\alpha}b)(j)| \leq \sum_{k=1}^{\infty} |\lambda_k| \sup_{t>0}|(\Phi_{t}^d \ast I_{\alpha} a_k)(j)|, \,\,\, \text{for all} \,\, j \in \mathbb{Z}^n.
\end{equation}
Then for $1 \leq q \leq \frac{n}{n - \alpha}$, by (\ref{discrete maximal}) and Minkowski's integral inequality on $\sigma$-finite measure spaces, we have
\begin{equation} \label{mink ineq}
\|\sup_{t>0} |(\Phi_{t}^d \ast I_{\alpha}b)| \|_{\ell^{q}(\mathbb{Z}^n)} \leq \sum_{k} |\lambda_k| \,
\| \sup_{t>0}|(\Phi_{t}^d \ast I_{\alpha} a_k)| \|_{\ell^{q}(\mathbb{Z}^n)}.
\end{equation}
Now for $0 < q < 1$, from (\ref{discrete maximal}), it is easy to check that
\begin{equation} \label{q ineq}
\| \sup_{t>0} |(\Phi_{t}^d \ast I_{\alpha}b)| \|_{\ell^{q}(\mathbb{Z}^n)}^q \leq \sum_{k} |\lambda_k|^q 
\| \sup_{t>0}|(\Phi_{t}^d \ast I_{\alpha} a_k)| \|_{\ell^{q}(\mathbb{Z}^n)}^q.
\end{equation}
Finally, since $0 < p \leq 1$ and $0 < p < q \leq \frac{n}{n - \alpha}$, the estimate (\ref{mink ineq}) or (\ref{q ineq}) according to the case, Proposition \ref{maximal atom estim}, and the fact that $\sum_{k} |\lambda_k |^{p} \leq C \| b \|_{H^{p}(\mathbb{Z}^n)}^{p}$  allow us to conclude that 
\[
\|\sup_{t>0}|(\Phi_{t}^d \ast I_{\alpha} b)| \|_{\ell^{q}(\mathbb{Z}^n)} \leq C \left( \sum_{k} |\lambda_k|^{\min\{1, q \}} \right)^{\frac{1}{\min\{1, q \}}} \leq C \left( \sum_{k} |\lambda_k |^{p} \right)^{1/p} \leq C\| b \|_{H^{p}(\mathbb{Z}^n)}.
\] 
Since $b$ is an arbitrary element of $H^{p}(\mathbb{Z}^n)$, the theorem follows.
\end{proof}

\begin{remark}
The argument used to prove Theorem 3.2 is similar to the one given in the proof of \cite[Theorem 2.4.5, see p. 140-141]{Grafakos}.
\end{remark}

{\bf Acknowledgements.} I am very grateful to the referee for the careful reading of the paper and for the useful comments and suggestions which helped me to improve the original manuscript.

Pablo Rocha, Instituto de Matem\'atica (INMABB), Departamento de Matem\'atica, Universidad Nacional del Sur (UNS)-CONICET, Bah\'ia Blanca, Argentina. \\
{\it e-mail:} pablo.rocha@uns.edu.ar

\end{document}